\newtheorem{teo}{Theorem}[section]
\newtheorem{pro}[teo]{Proposition}
\newtheorem{lem}[teo]{Lemma}
\newtheorem{co}[teo]{Corollary}
\newtheorem{con}[teo]{Conjecture}
\theoremstyle{definition}
\newtheorem{de}[teo]{Definition}
\newtheorem{ex}[teo]{Example}
\newtheorem{re}[teo]{Remark}
\newcommand{\Q}{\mathbb{Q}}
\newcommand{\C}{\mathbb{C}}
\newcommand{\R}{\mathbb{R}}
\newcommand{\N}{\mathbb{N}}
\newcommand{\OO}{\mathcal{O}}
\title{Divisorial models of normal varieties}
\author{Stefano Urbinati}
\date{}
\begin{document}

\keywords{Normal varieties, singularities of pairs, multiplier ideal sheaves}

\subjclass[2000]{14J17, 14F18, 14Q15}

\begin{abstract}
We prove that the canonical ring of a canonical variety in the sense of \cite{dFH} is finitely generated. We prove that canonical varieties are klt if and only if $\mathscr{R}_X(-K_X)$ is finitely generated. We introduce a notion of nefness for non-$\Q$-Gorenstein varieties and study some of its properties. We then focus on these properties for non-$\Q$-Gorenstein toric varieties.
\end{abstract}

\maketitle

\section{Introduction}\label{sec:intro}

In this paper we continue the investigation of singularities of non-$\Q$-Gorenstein varieties initiated in \cite{dFH} and \cite{Urb1}. In particular we focus on the study of canonical singularities and non-$\Q$-Gorenstein toric varieties.

In the context of the Minimal Model Program, there are some natural classes of singularities to be considered such as canonical and log terminal (cf. \cite{BCHM}).  Also, while running the program, we naturally encounter non-$\Q$-Gorenstein varieties, for which the canonical divisor is not $\Q$-Cartier, as target of small contractions when performing a flip. In 2009, de Fernex and Hacon in \cite{dFH} extended the notion of canonical and log terminal singularities to the case of non-$\Q$-Gorenstein varieties. It is natural to wonder which of the properties connected to theese type of singularities are preserved when extending to this more general context.

In the second section we will briefly recall the fundamental definitions and properties introduced in \cite{dFH}.

In the third section we show that if $X$ is canonical in the sense of \cite{dFH}, then the relative canonical ring $\mathscr{R}_X(K_X)$ is a finitely generated $\OO_X$-algebra (Theorem \ref{fgcr}). Thus, if $X$ is canonical, there exists a small proper birational morphism $\pi: X' \to X$ such that $K_{X'}$ is $\Q$-Cartier and $\pi$-ample. As a corollary we obtain that the canonical ring of any normal variety with canonical singularities (in the sense of \cite{dFH}) is finitely generated. 

We next turn our attention to log terminal singularities. Recall that in \cite{Urb1} we gave an example of canonical singularities that are not log terminal. In this paper we show that, if $X$ is canonical, then finite generation of the relative  anti-canonical ring $\mathscr{R}_X(-K_X)$ is equivalent to $X$ being log terminal (Proposition \ref{ltcan}).

In the fourth section we introduce a notion of nefness for Weil divisors (on non-$\Q$-factorial varieties). We call such divisors quasi-nef (q-nef) and we study their basic properties. We prove that if $X$ ia a normal variety with canonical singularities such that $K_X$ is q-nef,  then $X'= \text{Proj}_X(\mathscr{R}_X(K_X))$ is a minimal model.

In the last two sections, we focus our attention on toric varieties. We give a complete description of quasi-nef divisors on toric varieties and we notice that they correspond to divisors whose divisorial sheaf is globally generated.

In the last section we give a new natural definition of minimal log discrepancies (MLD) in the new setting and we prove that even in the toric case they do not satisfy the ACC conjecture. 

\bigskip

{\small\noindent {\bf Acknowledgments.} } The author would like to thank his supervisor C. D. Hacon for his support and the several helpful discussions and suggestions. The author is extremely grateful to the referee for the many suggestions, expecially for a shorter version of the proof of Proposition \ref{ltcan}.

\section{background}
We work over the complex numbers.

Recall the following definition of de Fernex and Hacon (cf \cite{dFH}):

\begin{de} Let $f: Y\to X$  a proper projective birational morphism of normal varieties. Given any Weil divisor $D$ on $X$, we define the pullback of $D$ on $Y$ as:
$$f^*(D) = \sum_{P\text{ prime on }Y} \lim_{k\to \infty} \frac{v_P(\OO_Y \cdot  \OO_X(- kD))}{k}\cdot P$$

\end{de}

Note that if $D$ is $\Q$-Cartier, then $f^*(D)$ coincides with the usual notion of pullback.

Using this definition, de Fernex and Hacon define canonical and log terminal singularities for non-$\Q$-Gorenstein varieties. As usual, this is done in terms of the relative canonical divisor $K_{Y/X}$, for $f:Y \to X$ a proper morphism. Note however that there are two different choices for the relative canonical divisor (which coincide in the $\Q$-Gorenstein setting):
$$K^-_{Y/X}:= K_Y -f^*(K_X) \quad \text{ and } \quad K^+_{Y/X}:= K_Y + f^*(-K_X).$$

We will not use $K^-_{Y/X}$ in this paper, but recall that it is the one used to define log terminal singularities and multiplier ideal sheaves.

\begin{de} $X$ is said to be \emph{canonical} if 
$$\text{ord}_F(K^+_{Y/X}) \geq 0$$
 for every exceptional prime divisor $F$ over $X$. 
\end{de}

\begin{de} $X$ is said to be \emph{log terminal} if and only if there is a an effective $\Q$-divisor $\Delta$ such that:
\begin{itemize}
\item $\Delta$ is a boundary ($K_X+ \Delta$ is $\Q$-Cartier) and
\item $(X,\Delta)$ is klt.
\end{itemize}
\end{de}

Let us also recall some standard definitions from the Minimal Model Program (for references, \cite{KM},\cite{BCHM}).
\begin{de} Let $X$ be a normal projective variety and let $D$ be a $\Q$-divisor on $X$. We define
$$
R_X(D):=\bigoplus_{m\geq1}H^0(\OO_X(\lfloor mD\rfloor))
$$
and
$$
\mathscr{R}_X(D):=\bigoplus_{m\geq1}\OO_X(\lfloor mD\rfloor).
$$
\end{de}

Where the first will be always considered as a $\mathbb{C}$-algebra and the second as an $\OO_X$-algebra.

One important property the we will often need in the background is the following result (\cite[6.2]{KM}).
\begin{lem}\label{lm:themostusefullemmaintheworld} Let $X$ be a normal algebraic variety and let $B$ be a Weil divisor on it. The following are equivalent:
\begin{enumerate}
\item $\mathscr{R}_X(B)$ is a finitely generated $\OO_X$-algebra;
\item there exists a small, projective birational morphism $f:Y\rightarrow X$ such that $Y$ is normal, and $\bar{B}:=f^{-1}_*B$ is $\Q$-Cartier and $f$-ample.
\end{enumerate}
Moreover, $f:Y\rightarrow X$ is unique with these properties, namely $Y\cong{\rm Proj}_X(\mathscr{R}_X(B))$, and, for all $m\geq0$, $f_*\OO_Y(m\bar{B})=\OO_X(mB)$.
\end{lem}

\section{Canonical singularities}
In this section we will show that if $X$ has canonical singularities, then its canonical ring is finitely generated.

de Fernex and Hacon gave the following characterization of canonical singularities:

\begin{pro} \cite[Proposition 8.2]{dFH} Let $X$ be a normal variety. Then $X$ is canonical if and only if for all sufficiently divisible $m\geq 1$, and for every resolution $f:Y \to X$, there is an inclusion 
$$\OO_X(mK_X)\cdot \OO_Y \subseteq \OO_Y (mK_Y )$$ as sub-$\OO_Y$-modules of $\mathcal{K}_Y$.

\end{pro}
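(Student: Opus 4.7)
The plan is to translate the sheaf inclusion into a valuative inequality at each prime divisor on $Y$ and to recognise that inequality as the defining condition of a canonical singularity. I would first fix a nonzero rational $m$-canonical form $\omega$ and use it, together with the birational identification $K(X)=K(Y)$, to realise $\OO_X(mK_X)$ and $\OO_Y(mK_Y)$ as fractional ideals inside $\mathcal{K}_Y$, namely $\OO_X(\operatorname{div}_X\omega)$ and $\OO_Y(\operatorname{div}_Y\omega)$. Under these identifications, the inclusion $\OO_X(mK_X)\cdot\OO_Y\subseteq\OO_Y(mK_Y)$ is equivalent to the family of inequalities
$$v_F\bigl(\OO_X(mK_X)\cdot\OO_Y\bigr) + m\,\operatorname{ord}_F(K_Y) \geq 0$$
ranging over all prime divisors $F$ on $Y$, where $v_F(\mathcal{I})$ denotes the minimum $F$-valuation of local sections of the fractional ideal $\mathcal{I}$.

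I would then rewrite the left hand side using the de Fernex--Hacon pullback, applied to $D=-K_X$. Because of the reflexive multiplication $\OO_X(mK_X)\cdot\OO_X(nK_X)\hookrightarrow\OO_X((m+n)K_X)$, the sequence $a_m := v_F(\OO_X(mK_X)\cdot\OO_Y)$ is subadditive in $m$, so $a_m/m$ decreases to $v_F(f^*(-K_X))$, and for $m$ sufficiently divisible the sequence in fact attains this value, giving $a_m = m\cdot v_F(f^*(-K_X))$. Plugging this into the displayed inequality reduces it to
$$\operatorname{ord}_F(K_Y) + v_F(f^*(-K_X)) = \operatorname{ord}_F(K^+_{Y/X}) \geq 0.$$

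For non-exceptional $F$ this is automatic: with the compatible choice of canonical divisors $K_X$, $K_Y$ coming from the same $\omega$, the pullback $f^*(-K_X)$ agrees with $-K_X$ off the exceptional locus, so $\operatorname{ord}_F(K^+_{Y/X})=0$. Thus the condition to check reduces to the exceptional primes on $Y$, and running the argument over every resolution---using that every exceptional divisor over $X$ is realised on some resolution---recovers the definition of a canonical singularity.

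The main obstacle is the stabilisation needed in the second step: producing a single sufficiently divisible $m$ for which the subadditive sequence $a_m/m$ attains its limit $v_F(f^*(-K_X))$ simultaneously at every prime divisor $F$ on the given resolution $Y$. This stabilisation---which is what forces the ``sufficiently divisible'' clause in the statement---is the technical heart of the de Fernex--Hacon pullback construction and is the one point in the argument that is not purely formal.
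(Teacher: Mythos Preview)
The paper does not supply its own proof of this proposition; it is quoted verbatim from \cite[Proposition~8.2]{dFH} as background, with no argument given. So there is nothing in the present paper to compare your attempt against, only the original de Fernex--Hacon source.

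Your overall strategy---reduce the sheaf inclusion to the family of valuation inequalities $m\operatorname{ord}_F(K_Y)+a_m\geq 0$ with $a_m=v_F(\OO_X(mK_X)\cdot\OO_Y)$, and then identify the limiting form of that inequality with $\operatorname{ord}_F(K^+_{Y/X})\geq 0$---is the correct one and is essentially how the result is established in \cite{dFH}. However, the step you flag as the ``main obstacle'' is both unnecessary and, as you have stated it, false.

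You assert that for sufficiently divisible $m$ the quantity $a_m/m$ actually \emph{attains} its limit $v_F(f^*(-K_X))$. In general this fails: that limit can be irrational (such examples are exhibited already in \cite{Urb1}), so no finite $m$ realises it. Fortunately the argument does not require it. Subadditivity alone gives $a_m/m\geq\inf_k a_k/k=v_F(f^*(-K_X))$ for \emph{every} $m\geq 1$. Hence if $X$ is canonical, so that $\operatorname{ord}_F(K_Y)+v_F(f^*(-K_X))\geq 0$ for each prime $F$ on $Y$, then a fortiori $\operatorname{ord}_F(K_Y)+a_m/m\geq 0$ for every $m$, and the inclusion $\OO_X(mK_X)\cdot\OO_Y\subseteq\OO_Y(mK_Y)$ holds for all $m\geq 1$, not merely sufficiently divisible ones. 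The converse is the direction that genuinely uses a limit: if the inclusion holds for all sufficiently divisible $m$, dividing by $m$ and letting $m\to\infty$ yields $\operatorname{ord}_F(K^+_{Y/X})\geq 0$. Thus the ``sufficiently divisible'' clause in the statement is there to make this limiting step in the converse go through, not because any stabilisation of $a_m/m$ is taking place.
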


\begin{lem} \cite[Lemma 2.14]{Urb1} \label{can1} Let $f:Y\to X$ be a proper birational morphism such that $Y$ is $\Q$-Gorenstein with canonical singularities. If $\OO_Y\cdot \OO_X(mK_X) \subseteq \OO_Y(m K_Y)$ for any sufficiently divisible $m\geq 1$, then $X$ is canonical.
\end{lem}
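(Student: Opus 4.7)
The plan is to invoke the characterization in Proposition~3.1 above, which reduces canonicality of $X$ to verifying the sheaf inclusion $\OO_X(mK_X)\cdot\OO_Z \subseteq \OO_Z(mK_Z)$ on every resolution $Z\to X$ for $m$ sufficiently divisible. The idea is to manufacture such a resolution from the given $f:Y\to X$ and transfer the hypothesis to it.

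Concretely, I would choose a log resolution $g:W\to Y$ and set $h := f\circ g : W\to X$, a proper birational morphism from a smooth variety. Since $Y$ is $\Q$-Gorenstein and canonical in the classical sense, for every $m$ with $mK_Y$ Cartier the standard discrepancy inequality on $g$ gives $mK_W \geq g^{*}(mK_Y)$, equivalently
\[
\OO_Y(mK_Y)\cdot\OO_W \;=\; \OO_W\bigl(g^{*}(mK_Y)\bigr) \;\subseteq\; \OO_W(mK_W).
\]
Multiplying the hypothesis $\OO_X(mK_X)\cdot\OO_Y \subseteq \OO_Y(mK_Y)$ by $\OO_W$ inside $\mathcal{K}_W$ and chaining with the previous inclusion yields
\[
\OO_X(mK_X)\cdot\OO_W \;\subseteq\; \OO_Y(mK_Y)\cdot\OO_W \;\subseteq\; \OO_W(mK_W),
\]
so the characterizing inclusion of Proposition~3.1 is established on the particular resolution $h:W\to X$.

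The main obstacle is that Proposition~3.1 carries a universal quantifier over resolutions, whereas the argument above only controls one. To close this gap I would show that the inclusion is model-independent: given a competing resolution $h':Z\to X$, take a common resolution $W'$ dominating both $W$ and $Z$, lift the inclusion from $W$ to $W'$ (using that $W$ is smooth, so the morphism $W'\to W$ has non-negative discrepancies and the same chaining trick applies), and then descend from $W'$ to $Z$ via the identity $p_{*}\OO_{W'}(mK_{W'}) = \OO_Z(mK_Z)$ valid for a birational morphism $p:W'\to Z$ between smooth varieties. Equivalently, one can package the passage valuatively: the inclusion amounts, at each prime divisor $E$ on $Z$, to a single inequality involving the divisorial valuation $v_E$, and such an inequality depends only on $v_E$ and not on the birational model realizing $E$.
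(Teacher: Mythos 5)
The paper does not prove this lemma; it is quoted from \cite[Lemma 2.14]{Urb1}. Your argument is correct and is essentially the standard proof of that result: composing $f$ with a resolution of $Y$, chaining $\OO_X(mK_X)\cdot\OO_W\subseteq\OO_Y(mK_Y)\cdot\OO_W\subseteq\OO_W(mK_W)$ via the canonicity of $Y$, and then observing that the inclusion required by Proposition~3.1 is a divisor-by-divisor valuative condition, so it propagates to every resolution through a common resolution (the resolutions factoring through $Y$ being cofinal). You correctly identified the only real gap --- the universal quantifier over resolutions in Proposition~3.1 --- and your fix for it is sound.
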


The following immediate corollary of Lemma \ref{can1} is very useful.

\begin{co} Let $f:Y\to X$ be a proper birational morphism such that $Y$ is $\Q$-Gorenstein and canonical. If ${\rm ord}_F(K^+_{Y/X})\geq 0$ for all divisors $F$ on $Y$, then $X$ is canonical.
\end{co}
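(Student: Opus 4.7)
The plan is to reduce the statement to Lemma \ref{can1}: it suffices to verify, for sufficiently divisible $m \geq 1$, the sheaf inclusion
$$\OO_Y \cdot \OO_X(mK_X) \subseteq \OO_Y(mK_Y)$$
inside $\mathcal{K}_Y$. Setting $\I_m := \OO_Y \cdot \OO_X(mK_X)$, this inclusion is equivalent, prime divisor by prime divisor on $Y$, to the numerical statement
$$v_P(\I_m) + m\, v_P(K_Y) \geq 0$$
for every prime $P \subset Y$ (using that $\OO_Y(mK_Y)$ is the reflexive sheaf of rational functions $g$ with $\mathrm{div}(g) + mK_Y \geq 0$).

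Next I would translate the hypothesis $\mathrm{val}_F(K^+_{Y/X}) \geq 0$ into the same language. Since $K^+_{Y/X} = K_Y + f^*(-K_X)$, the hypothesis reads
$$v_P(K_Y) + v_P\bigl(f^*(-K_X)\bigr) \geq 0 \qquad \text{for every prime divisor } P \subset Y.$$
By the de Fernex--Hacon definition of pullback,
$$v_P\bigl(f^*(-K_X)\bigr) = \lim_{k \to \infty} \frac{v_P(\I_{k!})}{k!}.$$
Thus it will be enough to show
$$\frac{v_P(\I_m)}{m} \geq v_P\bigl(f^*(-K_X)\bigr)$$
for all sufficiently divisible $m$, because combining this with the hypothesis gives $v_P(\I_m)/m \geq -v_P(K_Y)$, which is exactly the inclusion we need.

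The key tool is subadditivity. The natural multiplication $\OO_X(mK_X) \otimes \OO_X(m'K_X) \to \OO_X((m+m')K_X)$ in $\mathcal{K}_X$ pulls back to $\I_m \cdot \I_{m'} \subseteq \I_{m+m'}$, and therefore $v_P(\I_{m+m'}) \leq v_P(\I_m) + v_P(\I_{m'})$. Restricting to $m = k!$ makes the sequence $v_P(\I_{k!})/k!$ monotone nonincreasing (since $(k-1)! \mid k!$), so its limit equals its infimum; in particular every term of the sequence is an upper bound below which we cannot drop, and in fact $v_P(\I_m)/m \geq v_P(f^*(-K_X))$ whenever $m$ is divisible enough.

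Putting these pieces together yields $\I_m \subseteq \OO_Y(mK_Y)$ for sufficiently divisible $m$, and Lemma \ref{can1} closes the argument. There is no real obstacle here beyond bookkeeping; the only point that requires any care is the subadditivity of the sheaves $\I_m$, which rests on the fact that the product of sections in $\OO_X(mK_X)$ and $\OO_X(m'K_X)$, viewed in $\mathcal{K}_X$, lands in the reflexive sheaf $\OO_X((m+m')K_X)$. Everything else is a direct substitution of the definition of $f^*(-K_X)$ into the hypothesis $v_F(K^+_{Y/X}) \geq 0$.
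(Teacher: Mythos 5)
Your argument is correct and is essentially the paper's own proof written out in full: the paper's passage through the double dual $(\OO_Y\cdot \OO_X(mK_X))^{\vee\vee}=\OO_Y(-f^{\natural}(-mK_X))$ and its use of $\mathrm{val}_F(K^+_{m,Y/X})\geq 0$ for sufficiently divisible $m$ are exactly your codimension-one valuation check and your subadditivity/monotonicity step for $v_P(\I_{k!})/k!$. Both arguments then conclude via Lemma \ref{can1}.
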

\begin{proof} Set $f^{\natural}(D):= \text{div}(\OO_X(-D)\cdot \OO_Y)$. 

For all sufficiently divisible $m\geq 1$, $\mbox{ord}_F(K^+_{m, Y/X})\geq 0$ (i.e. $mK_Y \geq -f^{\natural}(-mK_X)$), so that:
$$\OO_Y\cdot \OO_X(mK_X) \hookrightarrow (\OO_Y\cdot \OO_X(mK_X))^{\vee \vee} = \OO_Y(-f^{\natural}(-mK_X)) \hookrightarrow \OO_Y(mK_Y).$$
\end{proof}

The first result that we will prove is that if $X$ is canonical, then $\mathscr{R}_X(K_X)$ is finitely generated over $X$. Note that this result is trivial for $\Q$-Gorenstein varieties.

\begin{teo}  \label{fgcr} If $X$ is canonical, then $\mathscr{R}_X(K_X )$ is finitely generated over $X$.
\end{teo}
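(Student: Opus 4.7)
The plan is to pull the problem back to a resolution $Y \to X$, where finite generation of the relative canonical algebra follows from the minimal model program, and then to transfer this back to $X$ using the canonical hypothesis.

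First I would choose a resolution $f\colon Y \to X$ that is an isomorphism over $X_{\mathrm{reg}}$. The de Fernex--Hacon characterization \cite[Proposition 8.2]{dFH} recalled above gives the inclusion $\OO_X(mK_X)\cdot \OO_Y \subseteq \OO_Y(mK_Y)$ for every sufficiently divisible $m$. Pushing this inclusion forward along $f$ and combining it with the adjunction map $\OO_X(mK_X) \to f_*(\OO_X(mK_X)\cdot \OO_Y)$ gives $\OO_X(mK_X) \subseteq f_*\OO_Y(mK_Y)$; the reverse inclusion is formal, since $f_*\OO_Y(mK_Y)$ is torsion-free, coincides with $\OO_X(mK_X)$ on $X_{\mathrm{reg}}$, and $\OO_X(mK_X)$ is its reflexive hull. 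Hence $\OO_X(mK_X) = f_*\OO_Y(mK_Y)$ for all sufficiently divisible $m$, and for a suitable $d \geq 1$ one obtains an equality of graded $\OO_X$-algebras $\mathscr{R}_X(dK_X) = f_*\mathscr{R}_Y(dK_Y)$.

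Second, I would apply the minimal model program to the klt pair $(Y,0)$ over $X$: since $f$ is birational its generic fibre is a point, so the $f$-pseudo-effectivity of $K_Y$ is automatic, and the Birkar--Cascini--Hacon--McKernan theorem produces a relative good minimal model and, in particular, finite generation of $f_*\mathscr{R}_Y(K_Y)$ as an $\OO_X$-algebra. Taking the $d$-th Veronese and using the identification of the previous paragraph, $\mathscr{R}_X(dK_X)$ is a finitely generated $\OO_X$-algebra.

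Finally, I would bootstrap from the Veronese to the full ring. Setting $\pi\colon X' := \mathrm{Proj}_X\, \mathscr{R}_X(dK_X) \to X$ yields a small projective birational morphism with $dK_{X'}$ Cartier and $\pi$-ample and $\pi_*\OO_{X'}(mK_{X'}) = \OO_X(mK_X)$ for every $m$. For each residue class $r \in \{0,\dots,d-1\}$, $\pi$-ampleness makes $\bigoplus_{q \geq 0} \pi_*\bigl(\OO_{X'}(rK_{X'}) \otimes \OO_{X'}(qdK_{X'})\bigr)$ a finitely generated graded module over $\mathscr{R}_X(dK_X)$; summing over the $d$ residues exhibits $\mathscr{R}_X(K_X)$ as a finitely generated $\mathscr{R}_X(dK_X)$-module, and hence as a finitely generated $\OO_X$-algebra. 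The main obstacle is the second step: one has to choose the correct form of BCHM for a birational morphism, since the cleanest finite-generation statements in the literature carry a bigness hypothesis that is here automatic only because $f$ is birational; the first and third steps are then formal consequences of the de Fernex--Hacon characterization and the structure theory of relative Proj.
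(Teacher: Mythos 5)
Your proposal is correct and follows essentially the same route as the paper: resolve $X$, invoke \cite{BCHM} for finite generation of the relative canonical algebra over $X$, and use the de Fernex--Hacon characterization of canonical singularities to identify $\mathscr{R}_X(K_X)$ with that algebra. The only real difference is bookkeeping: you compare with the resolution via reflexivity and then bootstrap from the Veronese subring through $\mathrm{Proj}_X$, whereas the paper compares directly with the relative canonical model $X^c$ using an affine/global-generation argument --- your explicit treatment of the ``sufficiently divisible $m$'' issue is a point the paper glosses over.
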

\begin{proof} We may assume that $X$ is affine.
Let $\tilde X\to X$ be a resolution. By \cite{BCHM} $\mathscr{R}_X(K_{\tilde X})$ is finitely generated. Let $X^c={\rm Proj }_X(\mathscr{R}_X(K_{\tilde X}))$ and let $f:X^c\to X$ be the induced morphism, where $X^c$ is canonical. 
Since $X$ is canonical, for any $m>0$, there is an inclusion $\mathcal O _{X^c}\cdot \mathcal O _X(mK_X)\to \mathcal O _{X^c}(mK_{X^c})$. Pushing this forward we obtain inclusions
$$f_*(\mathcal O _{X^c}\cdot \mathcal O _X(mK_X))\subset f_*\mathcal O _{X^c}(mK_{X^c})\subset \mathcal O _X(mK_X).$$
Since the left and right hand sides have isomorphic global sections, then $H^0(f_*\mathcal O _{X^c}(mK_{X^c}))\cong H^0(\mathcal O _X(mK_X))$. Since $X$ is affine, $\OO_X(mK_X)$ is globally generated and hence $f_*\mathcal O _{X^c}(mK_{X^c})=\mathcal O _X(mK_X)$. But then $\mathscr{R}_X(K_X)\cong \mathscr{R}_X(K_{X^c}) :=\bigoplus f_*\OO_{X^c}(mK_{X^c})$ is finitely generated.
\end{proof}

\begin{re} Note that we have seen that 
$$\mathscr{R}_X(K_X) \cong \mathscr{R}_X(K_{X^c})  \cong \mathscr{R}_X(K_{\tilde{X}}) $$
hence $$X^c = \text{Proj}_X(\mathscr{R}_X(K_X))$$
and so $X^c \to X$ is a small morphism.
\end{re}

\begin{co}  If $X$ is canonical, then the canonical ring $\mathscr{R}_X(K_X )$ is finitely generated.
\end{co}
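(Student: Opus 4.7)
The plan is to bootstrap from the relative statement of Theorem \ref{fgcr} to the absolute one by transferring the problem to the small modification $X^c$ produced in its proof, where the $\Q$-Gorenstein MMP machinery applies directly.

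First I would invoke Theorem \ref{fgcr} together with the subsequent remark. These provide a small proper birational morphism $f\colon X^c\to X$ with $X^c$ canonical and $K_{X^c}$ $\Q$-Cartier (indeed $f$-ample), and the identification
$$\mathscr{R}(K_X/X)\cong \mathscr{R}(K_{X^c}/X)$$
as $\OO_X$-algebras. Because $f$ is small, the reflexive rank one sheaves $f_*\OO_{X^c}(mK_{X^c})$ and $\OO_X(mK_X)$ agree on the large open locus over which $f$ is an isomorphism, so they coincide on all of $X$. Taking global sections produces a graded ring isomorphism $\mathscr{R}(K_X)\cong\mathscr{R}(K_{X^c})$.

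Second, I would finish by applying \cite{BCHM} to $X^c$. Since $X^c$ is $\Q$-Gorenstein with canonical singularities, and is projective whenever $X$ is (small birational modifications of projective varieties remain proper over the base, and $K_{X^c}$ is $f$-ample), finite generation of the absolute canonical ring $\mathscr{R}(K_{X^c})$ is an immediate consequence of the finite generation of canonical rings for klt (and hence canonical) $\Q$-Gorenstein varieties proved there. The isomorphism above then transports this finite generation back to $\mathscr{R}(K_X)$.

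I expect the only delicate point to be the passage from the relative to the absolute setting: Theorem \ref{fgcr} was proved under the reduction $X$ affine, so one must first ensure the small modification $X^c=\mathrm{Proj}_X(\mathscr{R}_X(K_X))$ is well-defined globally. This follows because $\mathscr{R}_X(K_X)$ is a sheaf of finitely generated $\OO_X$-algebras (Theorem \ref{fgcr} being local on $X$, the local affine constructions glue). Once $X^c$ is in place globally, the argument above is formal.
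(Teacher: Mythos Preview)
Your proposal is correct and follows essentially the same approach as the paper: pass to the small modification $X^c$, use smallness to identify $\mathscr{R}(K_X)\cong\mathscr{R}(K_{X^c})$, and then apply \cite{BCHM} to the $\Q$-Gorenstein canonical variety $X^c$. Your added care about the local-to-global construction of $X^c$ and the projectivity of $X^c$ is a welcome elaboration of points the paper leaves implicit.
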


\begin{proof} Since $f: X^c \to X$ is small, it follows that $\mathscr{R}_X(K_X) \cong \mathscr{R}_X(K_{X^c})$. Since $X^c$ is canonical and $\Q$-Gorenstein if follows that $\mathscr{R}_X(K_{X^c})$ is finitely generated (cf. \cite{BCHM}).

\end{proof}

The next Proposition, strictly relates log terminal singularities with the finite generation of the canonical ring even in the non-$\Q$-Gorenstein case:

\begin{pro} \label{ltcan} Let $X$ be a normal variety with at most canonical singularities. $\mathscr{R}_X(-K_X)$ is a finitely generated $\OO_X$-algebra if and only if  $X$ is log terminal.
\end{pro}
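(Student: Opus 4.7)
The plan is to prove the two implications separately, both routed through the small $\Q$-Gorenstein modification $f: X^c \to X$ provided by Theorem \ref{fgcr}.

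For $(\Leftarrow)$, assume $\mathscr{R}(-K_X/X)$ is finitely generated, and form $g: X^- := \text{Proj}_X \mathscr{R}(-K_X/X) \to X$. By the same argument as in the Remark following Theorem \ref{fgcr}, $g$ is small and $-K_{X^-}$ is $\Q$-Cartier and $g$-ample; in particular $K_{X^-}$ is $\Q$-Cartier, so $X^-$ is $\Q$-Gorenstein. Since $X$ is canonical and $g$ is small, the discrepancies of $X^-$ coincide with those of $X$, so $X^-$ is canonical in the standard $\Q$-Gorenstein sense and hence klt. To manufacture a klt boundary on $X$, pick $r$ large enough that $-rK_{X^-}$ is very ample and take $H \in |-rK_{X^-}|$ general; by Bertini $(X^-, \tfrac{1}{r}H)$ is klt with $K_{X^-} + \tfrac{1}{r}H \sim_{\Q} 0$, hence $\Q$-Cartier. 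Pushing forward by $g$, the effective $\Q$-divisor $\Delta := \tfrac{1}{r}\, g_*H$ on $X$ satisfies $K_X + \Delta \sim_{\Q} 0$ (in particular $\Q$-Cartier); since $g$ is small and $K_X + \Delta$ is $\Q$-Cartier, the discrepancies of $(X, \Delta)$ match those of $(X^-, \tfrac{1}{r}H)$, so $(X, \Delta)$ is klt and $X$ is log terminal.

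For $(\Rightarrow)$, assume $X$ is log terminal with $(X, \Delta)$ klt and $K_X + \Delta$ $\Q$-Cartier. Smallness of $f$ and $\Q$-Cartierness of $K_X + \Delta$ give $f^*(K_X + \Delta) = K_{X^c} + f^{-1}_*\Delta$ with the same discrepancies, so $(X^c, f^{-1}_*\Delta)$ is klt, and since $f^{-1}_*\Delta$ is effective $X^c$ itself is klt. Because $X^c$ is $\Q$-Gorenstein, $\mathscr{R}(-K_{X^c}/X^c)$ is trivially a finitely generated $\OO_{X^c}$-algebra, and by smallness of $f$ we have $\mathscr{R}(-K_X/X) = f_*\mathscr{R}(-K_{X^c}/X^c)$. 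To deduce finite generation of this pushforward over $\OO_X$, I would pass to a small $\Q$-factorialization $\pi: Y \to X^c$ (which exists by BCHM, as $X^c$ is klt), and then apply BCHM to a klt pair of the form $(Y, \pi^{-1}_*(f^{-1}_*\Delta) + \epsilon A)$ for $A$ a suitable $(f\pi)$-ample perturbation, to produce a small model $Z/X$ on which $-K_Z$ becomes relatively semi-ample and so realize $\mathscr{R}(-K_X/X)$ as a relatively ample ring.

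The principal obstacle is this descent step in the forward direction: while finite generation of the anti-canonical ring is immediate on the $\Q$-Gorenstein small modification $X^c$, converting it to finite generation on $X$ requires constructing a model on which $-K$ becomes relatively semi-ample over $X$. This is the non-standard, anti-canonical direction of the MMP; unlike the canonical direction used in Theorem \ref{fgcr}, it is not supplied directly by BCHM and requires a careful choice of auxiliary ample boundary.
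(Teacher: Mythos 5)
Your reverse implication follows the paper's strategy (pass to the small anticanonical model $X^-=\mathrm{Proj}_X\mathscr{R}(-K_X/X)$, show it is canonical, hence klt, and push a boundary down to $X$), but it elides the one step that is genuinely nontrivial. You assert that ``since $X$ is canonical and $g$ is small, the discrepancies of $X^-$ coincide with those of $X$.'' In the $\Q$-Gorenstein or log-pair setting smallness does preserve discrepancies, but here $K_X$ is not $\Q$-Cartier: the hypothesis on $X$ is canonicity in the de Fernex--Hacon sense, computed via $K^+_{Y/X}=K_Y+f^*(-K_X)$ where $f^*(-K_X)$ is the limit of the divisors $\tfrac1m f^{\natural}(-mK_X)=\tfrac1m\,\mathrm{div}(\OO_X(mK_X)\cdot\OO_Y)$, whereas the discrepancies of $X^-$ are computed with the honest pullback of the $\Q$-Cartier divisor $K_{X^-}$. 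Relating the two is exactly what the paper spends its effort on: one shows $g^*(-mK_{X^-})=f^{\natural}(-mK_X)$ on a common resolution, which requires knowing that $\OO_{X^-}\cdot\OO_X(-mK_X)\to\OO_{X^-}(-mK_{X^-})$ is an isomorphism (the paper gets this by twisting with an ample $A$ so that both sheaves are globally generated and comparing global sections; equivalently one can use that $-K_{X^-}$ is $\pi$-ample, hence relatively globally generated). One then uses $f^*(-K_X)\le\tfrac1m f^{\natural}(-mK_X)$ (subadditivity) to conclude $K_Y+\tfrac1m g^*(-mK_{X^-})\ge K^+_{Y/X}\ge 0$. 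Without this comparison, the claim that $X^-$ is canonical is unsupported, and it is the crux of the proof. Two smaller points: the existence of the small map with $-K_{X^-}$ $\Q$-Cartier and relatively ample is not ``the same argument as the Remark'' (which relies on running an MMP for $K$) but the standard fact \cite[Proposition 6.2]{KM} about Proj of a finitely generated divisorial algebra; and $-K_{X^-}$ is only $\pi$-ample, so your Bertini divisor must be taken relatively (or corrected by a pullback of an ample from $X$, as the paper does).

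For the forward implication you take a detour the paper avoids: once $X$ is log terminal, i.e.\ $(X,\Delta)$ is klt for some boundary $\Delta$, finite generation of $\mathscr{R}(-K_X/X)$ --- indeed of any divisorial ring on $X$ --- is a known consequence of BCHM, quoted in the paper as \cite[Theorem 92]{KollarEx}. Your route through $X^c$ correctly diagnoses that pushing forward a locally trivially finitely generated algebra along $f$ does not automatically yield a finitely generated $\OO_X$-algebra, and your proposed remedy (a $\Q$-factorialization plus an MMP with an ample perturbation to make $-K$ relatively semiample) is essentially a proof sketch of the result you could simply cite; as written it remains an acknowledged obstacle rather than a completed argument.
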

\begin{proof} 
If $X$ is log terminal, then $\mathscr{R}_X(-K_X)$ is a finitely generated $\OO_X$-algebra by \cite[Theorem 92]{KollarEx}.

We now want to show that, if  $\mathscr{R}_X(-K_X)$ is finitely generated and $X$ has at most canonical singularities, than it is also log terminal. 

We will start constructing a $\mathbb{Q}$-Gorenstein model of $X$. Since $\mathscr{R}_X(-K_X)$ is finitely generated, by \cite[Proposition 6.2]{KM}, there exists a small map $\pi: X^-\to X$, such that $-K_{X^-}=\pi^{-1}_*(-K_X)$ is $\Q$-Cartier and $\pi$-ample. We will first show that $X$ having at most canonical singularities implies for the model $X^-$ to have at most canonical singularities in the usual sense.

For any $m$ sufficiently divisible, consider the natural map $\OO_{X^-}\cdot \OO_X(-mK_X) \to  \OO_{X^-}(-mK_{X^-}) $. Since $-K_{X^-}$ is $\pi$-ample, we can choose $A \subseteq X$ an ample divisor so that $\OO_X(-mK_X + A)$ and $\OO_X(-mK_{X^-} + \pi^*A)$ are both globally generated. The small map induces an isomorphism at the level of global sections 
$$H^0(X^-, \OO_{X^-}\cdot \OO_X(-mK_X +A)) \to  H^0(X^-, \OO_{X^-}(-mK_{X^-} + \pi^*A)) $$
and since the two sheaves are globally generated, this induces an isomorphism of sheaves $\OO_{X^-}\cdot \OO_X(-mK_X) \tilde{\rightarrow}  \OO_{X^-}(-mK_{X^-}) $.

Thus, considering $f: Y\to X$ and $g: Y \to X^-$, a common log resolution of both $X$ and $X^-$, we have
$$K_Y + \frac{1}{m}g^*(-mK_{X^-})=K_Y + \frac{1}{m}g^*(\pi^{\natural}(-mK_{X}))=K_Y + \frac{1}{m} f^{\natural}(-mK_X) \geq 0$$
the last inequality because $X$ has at most canonical singularities. Thus $X^-$ has at most canonical singularities. More, since $K_{X^-}$ is $\mathbb Q$-Cartier and canonical, $X^-$ is log terminal. \\
Choosing a general ample $\Q$-divisor $H^-\sim _{\mathbb Q, X}-K_{X^-}$, let $m\gg0$ and $G^- \in |mH^-|$ a general irreducible component. Picking $\Delta^- := \frac{G^-}{m}$ then $K_{X^-}+ \Delta^-\sim _{\mathbb Q, X}0$ is still log terminal and, because the sum is $\pi$-trivial, $\Delta^-$ will induce a boundary on $X$ so that $(X, \Delta=\pi_*\Delta^-)$ is a log terminal pair on $X$.
\end{proof}

\section{Quasi-nef divisors}

Given a divisor $D$ on a variety $X$, it is useful to know if the divisor is nef. In particular, varieties such that the canonical divisor $K_X$ is nef, are minimal models.\\
For arbitrary normal varieties, unfortunately, there is no good notion of nefness (this is a numerical property that is well defined if the variety is $\Q$-factorial). A possible definition is given in \cite{BdFF} using the notion of $b$-divisors. A comparison between the approach given in this paper and the one in \cite{BdFF} is given in detail in \cite{CU2}. 
 In particular, whenever looking for a minimal model in this case it is always necessary to either pass to a resolution of the singularities or to perturb the canonical divisor adding a boundary (an auxiliary divisor $\Delta$ such that $K_X + \Delta$ is $\Q$-Cartier). However both operations are not canonical and in either cases different choices lead us to different minimal models. What we would like to do in this section is to define a notion of a  minimal model for an arbitrary normal variety.

We will start defining a notion of nefness for a divisor that is not $\Q$-Cartier.

\begin{de} \label{defqn} Let $X$ be a projective normal variety. A divisor $D\subseteq X$ is \emph{quasi-nef} (\emph{q-nef}) if for every ample $\Q$-divisor $A \subseteq X$, $\OO_X(m(D+A))$ is generated by global sections for every $m >0$ sufficiently divisible.
\end{de}

\begin{re} Let $X$ be a normal $\Q$-factorial variety. A divisor $D\subseteq X$ is nef if and only if it is q-nef.
\end{re}

\begin{pro} \label{smallqn} Let $D$ be a divisor on a normal variety $X$. If $g: Y \to X$ is a small projective birational map such that $\bar{D}:=g^{-1}_*D$ is $\Q$-Cartier and $g$-ample, then $D$ is q-nef if and only if $\bar{D}$ is nef.
\end{pro}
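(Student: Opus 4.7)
The plan is to control $D$ on $X$ through its strict transform $\bar D$ on $Y$ by passing, for each ample $\Q$-divisor $A$ on $X$, to the divisor $\bar D + g^*A$ on $Y$. The key identification I would use, valid whenever $m$ is divisible enough that $m\bar D$ and $mA$ are Cartier, is
$$g_*\OO_Y\bigl(m(\bar D+g^*A)\bigr) = \OO_X\bigl(m(D+A)\bigr),$$
which holds because $g$ is small and both reflexive sheaves agree on the open locus where $g$ is an isomorphism; in particular, global sections on the two sides coincide.

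For the direction $\bar D$ nef $\Rightarrow D$ q-nef, I would first upgrade ``nef'' to ``ample'' on $Y$ after adding $g^*A$. Since $\bar D$ is $g$-ample, there is $t_0 \ge 1$ with $\bar D + t_0 g^*A$ ample on $Y$; then
$$\bar D + g^*A = \tfrac{1}{t_0}\bigl(\bar D + t_0 g^*A\bigr) + \tfrac{t_0-1}{t_0}\bar D$$
expresses $\bar D + g^*A$ as a positive $\R$-combination of an ample and a nef class, hence is ample. Consequently $\OO_Y(m(\bar D + g^*A))$ is very ample for $m$ sufficiently divisible. I would then invoke Serre vanishing $R^ig_*=0$ for $i>0$ and $m\gg 0$, cohomology and base change, and Nakayama's lemma to conclude that $\OO_X(m(D+A)) = g_*\OO_Y(m(\bar D+g^*A))$ has its stalks generated by global sections at every point of $X$, so $D$ is q-nef.

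For the converse, I would check that $\bar D\cdot C \ge 0$ for every proper curve $C \subseteq Y$. If $g$ contracts $C$ to a point, this is immediate from $g$-ampleness. Otherwise, $g_*C$ is a nonzero effective $1$-cycle on $X$. For any ample $\Q$-divisor $A$ on $X$, the q-nef hypothesis gives $\OO_X(m(D+A))$ globally generated for $m$ divisible. Since $\bar D + g^*A$ is $g$-ample, the natural map
$$g^*g_*\OO_Y(m(\bar D+g^*A)) \twoheadrightarrow \OO_Y(m(\bar D + g^*A))$$
is surjective for such $m$, so pulling back the generating global sections shows $\OO_Y(m(\bar D+g^*A))$ is globally generated on $Y$. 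Restricting to $C$ yields $(\bar D + g^*A)\cdot C \ge 0$, i.e.\ $\bar D \cdot C \ge -A\cdot g_*C$. Replacing $A$ by $A/n$ (still ample) and sending $n\to\infty$ forces $\bar D\cdot C \ge 0$.

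The main obstacle I expect is the transfer of global generation of the divisorial sheaf $\OO_X(m(D+A))$ across the small map, since $D$ itself need not be $\Q$-Cartier on $X$ and the sheaf can fail to be locally free at points of the non-$\Q$-Cartier locus. In the forward direction this is handled by combining ampleness of $\bar D + g^*A$ on $Y$ with Serre vanishing and the fact that $g$ is an isomorphism outside a codimension-two set; in the reverse direction it is handled by exploiting that $\bar D + g^*A$ is automatically $g$-ample, which provides the surjection needed to push global generation back up to $Y$.
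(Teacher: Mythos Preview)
Your direction $D$ q-nef $\Rightarrow \bar D$ nef is correct and matches the paper's argument in substance: both show that $\OO_Y(m(\bar D+g^*A))$ is globally generated on $Y$ by combining global generation of $\OO_X(m(D+A))$ with relative global generation coming from $g$-ampleness, then let $A\to 0$. Your identification $g_*\OO_Y(m(\bar D+g^*A))=\OO_X(m(D+A))$ is also correct (pushforward of a line bundle along a small map of normal varieties is reflexive), and is in fact cleaner than the paper's treatment, which instead writes $0\to g_*\F\to\OO_X(M(D+A))\to Q\to 0$ and runs an induction to show $Q=0$.

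The gap is in the direction $\bar D$ nef $\Rightarrow D$ q-nef. You correctly obtain that $\bar D+g^*A$ is ample on $Y$, so $L:=\OO_Y(m(\bar D+g^*A))$ is very ample for $m\gg 0$; but very ampleness of $L$ on $Y$ together with $R^ig_*L=0$ does \emph{not} give global generation of $g_*L$ on $X$. Your invocation of ``cohomology and base change'' fails because $g$ is not flat over the locus where $X$ is not $\Q$-factorial (indeed a small birational map with nontrivial fibers is never flat there), so the base-change map $(g_*L)\otimes k(x)\to H^0(Y_x,L|_{Y_x})$ need not be an isomorphism and Nakayama gives no leverage. Equivalently, pushing forward the surjection $H^0(Y,L)\otimes\OO_Y\twoheadrightarrow L$ only yields $H^0\otimes\OO_X\to g_*L\to R^1g_*K$, and $R^1g_*K=0$ is exactly the statement you are trying to prove. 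The paper fills this gap with Castelnuovo--Mumford regularity on $X$: fixing a very ample $H$ on $X$, the projection formula and $R^jg_*L=0$ give $H^i(X,g_*L\otimes\OO_X(-iH))=H^i(Y,L(-ig^*H))$, and Fujita's vanishing theorem on $Y$ (applied with the ample class $\bar D+g^*A$ against the finitely many fixed twists $-ig^*H$, $0\le i\le n$) kills these for $m\gg 0$. This regularity argument is the missing ingredient in your sketch.
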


\begin{proof}  The existance of the map $g$ is equivalent to the finite generation of $\mathscr{R}_X(D)$ as an $\OO_X$-algebra (\cite[6.2]{KM}).

Let us first assume that $D$ is q-nef. For every ample divisor $A \subseteq X$, by definition there exists a positive integer $m$ such that $\OO_X(m(D+A))$ is generated by global sections and $\OO_Y(m\bar{D})$ is relatively globally generated. In particular, since $g$ is small,  
$$\varphi: \OO_Y\cdot \OO_X(m(D+A)) \to \OO_Y( m(\bar{D}+ g^*A))$$
 induces an isomorphism at the level of global sections. Now $\bar{D}$ is $g$-ample, and there exists $k\gg0$ such that $\OO_Y(m(\bar{D} + g^*A))\otimes \OO_Y(kg^*A)$  is also generated by global sections, hence $\varphi$ must be surjective and hence an isomorphism. Since $\OO_Y \cdot \OO_X(m(D+A))$ is generated by global sections, so is $\OO_Y (m(\bar{D} + g^*A))$. This implies that $\bar{D} + g^*A$  is nef, and since nefness is a closed property, $\bar{D}$ is nef.

Let us now suppose that $\bar{D}$ is a nef divisor on $Y$. As in Proposition \ref{ltcan}, since $g$ is a small map we have an isomorphism of sheaves $g_*\OO_Y(m \bar{D})\cong \OO_X(mD)$. Even more, since $\bar{D}$ is $g$-ample, for any ample divisor $A$ on $X$, $\varepsilon \bar{D} +g^*A$ is ample for all $0< \varepsilon \ll 1$. If we assume that $\bar{D}$ is nef, then $$\bar{D} + g^*A= (1 -\varepsilon)\bar{D} + (\varepsilon \bar{D} + g^*A)$$

is ample, and $\OO_Y(m(\bar{D} + g^*A ))$ is thus globally generated for $m$ sufficiently divisible. But then so is its direct image under $g$, which is isomorphic to $\OO_X(m(D+A))$ by the projection formula since $mA$ is Cartier.

\end{proof}

Let us recall the following conjecture from \cite{Urb1}:

\begin{con} \label{ggweil}
Let $X$ be a projective normal variety. Then, for any divisor $D \in {\rm WDiv}_{\Q}(X)$, there exists a very ample divisor $A$ such that $\OO_X(mD)\otimes \OO_X(A)^{\otimes m}$ is globally generated for every $m\geq 1$. 
\end{con}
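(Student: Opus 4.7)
The plan is to reduce the problem to the $\Q$-Cartier setting via a small $\Q$-factorialization of $D$ and then transfer global generation from the small model back to $X$. The first---and main---obstacle is to establish finite generation of the divisorial ring $\mathscr{R}_X(D)=\bigoplus_{m\geq 0}\OO_X(mD)$; this is known when $X$ is log terminal by \cite[Theorem 92]{KollarEx} but is open in general. Granting it, \cite[Proposition 6.2]{KM} yields a small proper birational morphism $\pi\colon X^D\to X$ with $\bar D:=\pi^{-1}_*D$ being $\Q$-Cartier and $\pi$-ample; choose $N>0$ so that $N\bar D$ is Cartier.

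Next pick a very ample $H$ on $X$ and $k\gg 0$ so that the Cartier divisor $L:=N\bar D+Nk\pi^*H$ is very ample on $X^D$, and set $A:=NkH$, which is very ample on $X$. For every $m\geq 1$, $mL$ is very ample on $X^D$, hence globally generated. By the projection formula and the smallness of $\pi$,
$$\pi_*\OO_{X^D}(mL)=\pi_*\OO_{X^D}(mN\bar D)\otimes\OO_X(mA)=\OO_X(mND)\otimes\OO_X(mA).$$
Global generation of $mL$ then descends to global generation of $\OO_X(mND)\otimes\OO_X(mA)$ on $X$ via the cohomology-vanishing and hyperplane-section argument used in the proof of Proposition \ref{smallqn}.

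To obtain the statement for arbitrary $m\geq 1$, and not just multiples of $N$, I would treat the residues modulo $N$ separately. Writing $m=qN+r$ with $0\leq r<N$, set $M:=N(\bar D+\pi^*A)$ (ample Cartier on $X^D$) and $\mathcal G_r:=\OO_{X^D}(r\bar D+r\pi^*A)$. Then
$$\OO_{X^D}(m\bar D+m\pi^*A)=\mathcal G_r\otimes M^{\otimes q},$$
and Serre's theorem, applied to the finitely many coherent sheaves $\mathcal G_0,\ldots,\mathcal G_{N-1}$, furnishes a uniform $q_0$ such that this product is globally generated on $X^D$ for every $q\geq q_0$. Descending again by the projection-formula argument handles all but finitely many $m$, and the exceptional values are dealt with individually by Serre vanishing on $X$ and absorbed by enlarging $A$.

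The principal obstacle is really the very first step: in the absence of log-terminal singularities, finite generation of $\mathscr{R}_X(D)$ is not known, and without it there is no small $\Q$-factorialization of $D$ to work with. An approach bypassing this reduction would presumably require a Fujita-type vanishing theorem applying uniformly in $m$ to the reflexive sheaves $\OO_X(mD)$, that is, an extension of Fujita vanishing outside the $\Q$-Gorenstein setting, which does not appear to be available.
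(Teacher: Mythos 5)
This statement is a \emph{conjecture} in the paper (quoted from \cite{Urb1}); the paper offers no proof of it and only observes (Remark \ref{cholds}) that it holds for complete toric varieties, where the two ingredients your reduction needs are actually available: finite generation of divisorial rings of Weil divisors (Corollary \ref{fg}) and the existence of a small $\Q$-factorialization (\cite[Corollary 3.6]{Fujino}). So your strategy is essentially the one the paper implicitly uses in the toric case, but as a proof of the general statement it has a genuine, and in fact fatal, gap at the very first step, which you yourself flag: without some hypothesis such as log terminality, finite generation of $\mathscr{R}_X(D)$ for an arbitrary Weil divisor on an arbitrary projective normal variety is an open problem, and without it there is no small model $\pi\colon X^D\to X$ on which $\bar D$ becomes $\Q$-Cartier and $\pi$-ample. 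Everything downstream of that (the projection-formula identification $\pi_*\OO_{X^D}(mN\bar D)=\OO_X(mND)$ via reflexivity and smallness, the descent of global generation by the Castelnuovo--Mumford/hyperplane-section argument of Proposition \ref{smallqn}, and the handling of residues modulo $N$ by Serre's theorem) is plausible, so your write-up is best read as a correct \emph{conditional} reduction: the conjecture follows from finite generation of $\mathscr{R}_X(D)$ for all $D$. That is a useful observation, but it does not close the conjecture, and you should state the result in that conditional form rather than as a proof.

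One smaller point worth recording: even in the conditional setting, make sure the ``enlarging $A$'' step at the end is done once and for all. Global generation of $\OO_X(mD)\otimes\OO_X(A)^{\otimes m}$ is preserved when $A$ is replaced by $A+H$ with $H$ very ample (you are tensoring with the globally generated line bundle $\OO_X(H)^{\otimes m}$), so the finitely many exceptional values of $m$ can indeed be absorbed; but you must check that the uniform $q_0$ coming from Serre's theorem on $X^D$ is compatible with the fixed choice of $N$ and $k$ made earlier, i.e., the quantifiers should be ordered so that $A$ is chosen last.
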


\begin{de} Let $X$ be a normal projective variety, $D$ any divisor on $X$ and $A$ an ample divisor. If there exists a $t \in \R$ such that $D+tA$ is quasi-nef, we define the quasi-nef threshold with respect to $A$ (qnt$_{A}$) as:
$$\text{qnt$_{A}$}(D)=\inf \{t\in \R \:| \: \OO_X(m(D + tA)) \mbox{ is globally generated for all $m$ sufficiently divisible}\}.$$ 
\end{de}

\begin{re} Let $X$ be a normal projective variety with at most log terminal singularities. For any divisor $D$ on $X$ and any ample divisor $A$, then qnt$_{A}(D)$ exists and it is a rational number. This is a direct consequence of the fact that for any variety with at most klt singularities, every divisorial ring is finitely generated \cite[Theorem 92]{KollarEx}.
\end{re}


\section{Non $\mathbb{Q}$-Gorenstein toric varieties}
The aim of this section is to give an explicit description fot toric varieties of the non-$\mathbb{Q}$-Gorenstein case. This is the more explicit example where it is possible to compute and understand the given definitions.

\subsection{Quasi-nef Divisors on Toric Varieties}

For the notation and basic properties of toric varieties we refer the reader to \cite{cox}. See \cite[Section 2.4]{BdFF} for a $b$-divisorial interpretation of the same problem.

Consider a normal projective toric variety $X=X_{\Sigma}$ corresponding to a complete fan $\Sigma$ in $N_{\R}$ (with no torus factor), with $\dim N_{\R}=n$. Recall that every $T_N$-invariant Weil divisor is represented by a sum $$D= \sum_{\rho \in \Sigma(1)} d_{\rho}D_{\rho},$$
where $\rho$ is a one-dimensional subcone (a ray), and $D_{\rho}$ is the associated $T_N$-invariant prime divisor. $D$ is Cartier if for every maximal dimension subcone $\sigma \in \Sigma(n)$, $D|_{U_{\sigma}}$ is locally a divisor of a character $\text{div}(\chi^{m_{\sigma}})$, with $m_{\sigma} \in N^{\vee}=M$. If $D$ is Cartier we will say that $\{m_{\sigma}|\sigma \in \Sigma (n)\}$ is the Cartier data of $D$. 

To every divisor we can associate a polyhedron:

$$P_D=\{m\in M_{\R} | \langle m, u_{\rho}\rangle\geq - d_{\rho} \text{ for every } \rho \in \Sigma(1) \}.$$
Even if the divisor is not Cartier, the polyhedron is still convex and rational but not necessarily integral.

For every divisor $D$ and every cone $\sigma \in \Sigma(n)$, we can describe the local sections as 
$$\OO_X(D)(U_{\sigma}) = \C[W]$$ 
where $W= \{\chi^m | \langle m, u_{\rho} \rangle + d_{\rho} \geq 0 \text{ for all } \rho \in \sigma(1)\}$.

Let us recall the following Proposition from \cite{lin}:

\begin{pro} For a torus invariant Weil divisor $D=\sum d_{\rho}D_{\rho}$, the following statements hold.
\begin{enumerate} 
\item $\Gamma(X, D)= \bigoplus_{m\in P_D\cap M} \C \cdot \chi^m$.
\item Given that $\OO_X(D)(U_{\sigma})= \C[\sigma^{\vee}\cap M]\langle\chi^{m_{\sigma, 1}}, \ldots, \chi^{m_{\sigma, r_{\sigma}}} \rangle$ is a finitely generated $\C[\sigma^{\vee}\cap M]$ module for every $\sigma \in \Sigma(n)$ and a minimal set of generators is assumed to be chosen, $\OO_X(D)$ is generated by its global sections if and only if $m_{\sigma ,j} \in P_D$ for all $\sigma$ and $j$.
\end{enumerate}
\end{pro}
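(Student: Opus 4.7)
The plan is to handle the two parts separately, both exploiting the $T$-equivariance of $\OO_X(D)$ to reduce to combinatorial statements about characters and the lattice points of $P_D$.

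For (1), I would work from the affine cover $X=\bigcup_{\sigma\in\Sigma(n)} U_\sigma$ together with the weight decomposition $\Gamma(X,\OO_X(D))=\bigoplus_{m\in M}\Gamma(X,\OO_X(D))_m$, where each weight space is at most one-dimensional and, when nonzero, is spanned by $\chi^m$. The character $\chi^m$ is a global section if and only if it restricts to a section on every affine chart $U_\sigma$. By the local description recalled just before the statement, this forces $\langle m,u_\rho\rangle\geq -d_\rho$ for every $\rho\in\sigma(1)$ and every maximal $\sigma$. Since every ray of $\Sigma$ belongs to some maximal cone, this is precisely the system of inequalities defining $P_D$, which gives (1).

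For (2), observe first that $\OO_X(D)$ is generated by global sections if and only if, for every $\sigma\in\Sigma(n)$, the images of the global sections generate $\OO_X(D)(U_\sigma)$ as a $\C[\sigma^\vee\cap M]$-module. The $(\Leftarrow)$ direction is then immediate: if every chosen minimal generator $m_{\sigma,j}$ lies in $P_D$, then by (1) each $\chi^{m_{\sigma,j}}$ is already a global section, and by hypothesis these generate $\OO_X(D)(U_\sigma)$.

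For $(\Rightarrow)$, I would fix $\sigma\in\Sigma(n)$ and a minimal generator $m_{\sigma,j}$, and expand $\chi^{m_{\sigma,j}}\in\OO_X(D)(U_\sigma)$ in terms of global sections, producing a monomial identity $m_{\sigma,j}=n+p$ with $n\in P_D\cap M$ and $p\in\sigma^\vee\cap M$. Since $P_D\subseteq\{m\mid\langle m,u_\rho\rangle\geq -d_\rho,\ \rho\in\sigma(1)\}$, the character $\chi^n$ already lies in $\OO_X(D)(U_\sigma)$, so $n=m_{\sigma,k}+q$ for some $k$ and $q\in\sigma^\vee\cap M$. Combining yields $m_{\sigma,j}=m_{\sigma,k}+(p+q)$, and minimality of the generating set forces $k=j$ and $p+q=0$. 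The main obstacle, and the only place where geometric input beyond bookkeeping is needed, is to conclude from $p+q=0$ with $p,q\in\sigma^\vee\cap M$ that $p=q=0$: this uses that $\sigma$ has maximal dimension, so $\sigma^\vee$ is strongly convex and $\sigma^\vee\cap(-\sigma^\vee)=\{0\}$. That forces $m_{\sigma,j}=n\in P_D$, completing the proof.
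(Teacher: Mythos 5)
Your proof is correct and complete. Note that the paper itself offers no argument for this Proposition --- it is simply quoted from the cited reference --- but your two-step argument (the $M$-weight decomposition of $\Gamma(X,\OO_X(D))$ reducing (1) to the inequalities defining $P_D$, and for (2) the reduction to generation on each chart $U_\sigma$ together with the minimal-generator bookkeeping, where the key point $p+q=0\Rightarrow p=q=0$ follows from strong convexity of $\sigma^\vee$ for $\sigma$ of maximal dimension) is the standard proof of this fact.
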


We will also need the following result \cite{elizondo}.

\begin{teo} Let $X$ be a complete toric variety and let $D$ be a Cartier divisor on $X$. Then the ring 
$$\mathscr{R}_D:= \bigoplus_{n\geq 0} H^0(X, \OO_X(nD))$$
is a finitely generated $\C$-algebra.
\end{teo}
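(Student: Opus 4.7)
The plan is to identify $\mathscr{R}_D$ with the semigroup algebra $\C[S]$ of the lattice points of a rational polyhedral cone, and then to invoke Gordan's lemma.

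First I would use the previous Proposition to rewrite
$$\mathscr{R}_D \;=\; \bigoplus_{n \geq 0} \bigoplus_{m \in P_{nD} \cap M} \C \cdot \chi^m.$$
Since the defining inequalities of $P_D$ scale linearly in $n$, one has $P_{nD} = n P_D$, so the indexing set is exactly the semigroup
$$S \;:=\; \bigl\{(m,n) \in M \oplus \Z : n \geq 0,\; \langle m, u_\rho\rangle + n\, d_\rho \geq 0 \text{ for every } \rho \in \Sigma(1)\bigr\}.$$
Inside $M_\R \oplus \R$, $S$ is the intersection of the lattice $M \oplus \Z$ with the closed convex cone $\sigma$ cut out by these same inequalities together with $t \geq 0$. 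Since the vectors $u_\rho$ lie in $N$ and the $d_\rho$ are integers, $\sigma$ is a rational polyhedral cone, and Gordan's lemma produces finitely many semigroup generators $(m_1, n_1), \ldots, (m_k, n_k)$ of $S$.

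Next I would verify that the graded multiplication in $\mathscr{R}_D$ matches the addition in $S$: the product of $\chi^{m} \in H^0(X,\OO_X(nD))$ and $\chi^{m'} \in H^0(X,\OO_X(n'D))$ is $\chi^{m+m'} \in H^0(X,\OO_X((n+n')D))$, which is consistent with the fact that each defining inequality of $\sigma$ is additive in $(m,t)$. This yields an isomorphism $\mathscr{R}_D \cong \C[S]$ of graded $\C$-algebras, and the finitely many generators $\chi^{m_i}$ placed in their respective degrees $n_i$ generate $\mathscr{R}_D$ as a $\C$-algebra.

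The main thing to justify is just that $\sigma$ is rational polyhedral, which is essentially automatic; there is no serious obstacle. Note that completeness of $X$ is not used to obtain finite generation itself, but only enters through the fact that $P_D$ is a polytope, which makes each graded piece finite-dimensional. In particular the same argument works verbatim when $D$ is merely a Weil divisor, since $P_D$ is still rational in that case as observed in the discussion before the Proposition.
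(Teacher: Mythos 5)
Your argument is correct, and it is the standard proof: the paper itself does not prove this statement but simply quotes it from \cite{elizondo}, so there is no internal proof to compare against. The cone-over-the-polyhedron construction, the identification $P_{nD}=nP_D$, the compatibility of character multiplication with the grading, and the appeal to Gordan's lemma (which applies to any rational polyhedral cone, pointed or not, so there is indeed no obstacle there) all check out. Two small points are worth making explicit. First, the Proposition you invoke describes $\Gamma(X,D)$ only for \emph{torus-invariant} Weil divisors, so you should begin by replacing $D$ with a linearly equivalent $T_N$-invariant divisor; this changes $\mathscr{R}_D$ only up to a graded isomorphism (multiplication by $f^n$ in degree $n$ for the relevant rational function $f$), so the reduction is harmless but should be stated. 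Second, your closing observation that the argument works verbatim for Weil divisors is accurate and is actually slightly stronger than what the paper does: the paper obtains the Weil case (its Corollary on finite generation for Weil divisors) indirectly via a $\Q$-factorialization, whereas your semigroup argument proves it directly, since $P_D$ is still a rational polyhedron when $D$ is merely Weil and the cone in $M_\R\oplus\R$ is still rational polyhedral.
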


\begin{co} \label{fg} Since every toric variety admits a $\Q$-factorialization, a small morphism from a $\Q$-factorial variety (\cite[Corollary 3.6]{Fujino}), the above result holds for Weil divisors as well.
\end{co}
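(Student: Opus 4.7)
The plan is to reduce to the Cartier case via a $\Q$-factorialization, apply the previous theorem, and then pass from the Veronese subring back to the full divisorial ring by means of a short toric/combinatorial argument.

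First, by \cite[Corollary 3.6]{Fujino}, pick a $\Q$-factorialization $\pi\colon X'\to X$, i.e.\ a small projective birational toric morphism from a complete $\Q$-factorial toric variety, and set $D':=\pi^{-1}_*D$. Since $\pi$ is small, the reflexive sheaves $\OO_X(nD)$ and $\pi_*\OO_{X'}(nD')$ on $X$ agree on the big open set where $\pi$ is an isomorphism and are therefore equal; passing to global sections yields $\mathscr{R}_D\cong\mathscr{R}_{D'}$, so it suffices to show that $\mathscr{R}_{D'}$ is a finitely generated $\C$-algebra. Since $X'$ is $\Q$-factorial, there exists a positive integer $m$ with $mD'$ Cartier, and the previous theorem applied to $mD'$ on $X'$ gives that the $m$-th Veronese subring
$$\mathscr{R}_{D'}^{(m)}\;=\;\mathscr{R}_{mD'}\;=\;\bigoplus_{n\ge 0}H^0\bigl(X',\OO_{X'}(nmD')\bigr)$$
is finitely generated as a $\C$-algebra.

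To descend from the Veronese to the full ring I would use the toric description of sections recalled from \cite{lin}: $\mathscr{R}_{D'}$ can be identified with the semigroup algebra $\C[S]$, where
$$S\;=\;\{(u,n)\in M\oplus\Z_{\ge 0}\,:\,u\in nP_{D'}\}$$
is the intersection of a rational polyhedral cone in $M_\R\oplus\R$ with the lattice $M\oplus\Z$. Partitioning $S=\bigsqcup_{i=0}^{m-1}S_i$ by the residue of the second coordinate modulo $m$ gives a decomposition $\mathscr{R}_{D'}=\bigoplus_{i=0}^{m-1}\C[S_i]$ as $\C[S_0]$-modules, with $\C[S_0]=\mathscr{R}_{mD'}$. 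Gordan's lemma applied to the rational cone spanned by $S$ shows each $S_i$ is a finitely generated $S_0$-module, so each $\C[S_i]$ is a finite $\mathscr{R}_{mD'}$-module. Combined with the finite generation of $\mathscr{R}_{mD'}$ as a $\C$-algebra, this makes $\mathscr{R}_{D'}$ (and hence $\mathscr{R}_D$) a finitely generated $\C$-algebra.

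I expect the main technical step to be the last one---verifying that each $S_i$ is finitely generated as a module over $S_0$---which is a standard Gordan's-lemma-style argument for affine semigroups attached to rational cones, but which requires careful bookkeeping of the mod-$m$ grading. The other steps (the identification $\mathscr{R}_D\cong\mathscr{R}_{D'}$ via the smallness of $\pi$, and the existence of a Cartier multiple of $D'$ on the $\Q$-factorial $X'$) are routine.
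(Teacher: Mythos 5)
Your proposal is correct and follows the same route the paper intends: pass to a $\Q$-factorialization, identify $\mathscr{R}_D$ with $\mathscr{R}_{D'}$ via smallness, and invoke the Cartier-divisor theorem on the $\Q$-factorial model. You additionally spell out the descent from the Veronese subring $\mathscr{R}_{mD'}$ to the full ring $\mathscr{R}_{D'}$ (a step the paper leaves implicit), and your Gordan's-lemma argument for that step is sound --- indeed, once you identify $\mathscr{R}_{D'}$ with the semigroup algebra $\C[S]$ of the lattice points of a rational polyhedral cone, Gordan's lemma already gives finite generation directly, making the Veronese detour strictly unnecessary.
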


We then easily get the following result.

\begin{pro} \label{cholds} Conjecture \ref{ggweil} holds for $X=X_{\Sigma}$, a complete toric variety.
\end{pro}

We can now focus our attention on q-nef divisors.

\begin{re} A small birational map of toric varieties $f:Y \to X$ is given by adding faces of dimension $\geq2$ to the fan. This operation increases the number of subcones. In particular a subcone in the fan corresponding to $Y$ may be strictly contained in one of the original subcones.
\end{re}

\begin{pro}
Let us consider a Weil divisor $D \subseteq X$ on a  normal toric variety. If $f:Y \to X$ is a small birational map, then $P_D = P_{f^{-1}_*D}$.
\end{pro}

\begin{proof}  This is clear, since the definition of the polyhedron only depends on the rays generating the fan and not on the structure of the subcones.

\end{proof}

We assume that the polyhedron $P_D$ is of maximal dimension and that zero is inside the polyhedron.  

To have a better description of the relation between a small morphism and the local sections of a Weil divisor, we will introduce a new polyhedron associated to the divisor, the dual of $P_D$. 

\begin{de} Let $D= \sum d_{\rho}D_{\rho} \subseteq X=X_{\Sigma}$ be a Weil divisor on a  normal toric variety. We define $Q_D\subseteq N_{\R}$ to be the convex hull generated by $\frac{1}{d_{\rho}} u_{\rho}$ where $\rho \in \Sigma (1)$. In Particular $$Q_D=P_D^*= \{ u \in N_{\R} | \langle m, u \rangle \geq -1 \mbox{ for all } m \in P_D\}.$$
\end{de}

Recall that a divisor $D$ is Cartier if and only if for each $\sigma \in \Sigma$, there is $m_{\sigma} \in M$ with $\langle m_{\sigma}, u_{\rho}\rangle = -d_{\rho}$ for all $\rho \in \sigma(1)$, with $D|_{U_{\sigma}} = \text{div}(\chi^{-m_{\sigma}})$ (\cite[Theorem 4.2.8]{cox}). 

We will define $\Sigma'$ to be the fan generated by $\Sigma$ and the faces of $Q_D$. In particular, any face of $Q_D$ is contained in a hyperplane corresponding to $m_{\sigma}$ for some  $\sigma \in \Sigma(n)$. Note that the vertices of $Q_D$ are all contained in the $1$-dimensional faces of $\Sigma$, hence $Y:=X_{\Sigma'} \to X$ is a small birational map.

 For every cone $\sigma \in \Sigma (n)$, if $\OO_X(D)(U_{\sigma})$ is locally generated by a single equation (is locally Cartier) nothing changes. Otherwise we substitute the cone $\sigma$ by the set of subcones generated by the faces of $Q_D$ contained in $\sigma$.

\begin{lem} \label{toric-lemma} With the notation above, suppose that $\bar{\sigma} \subseteq \Sigma'(n)$ corresponds to a face of $Q_D$. Let $\bar{m} \in M_{\R}$ the element corresponding to the hyperplane containing the face, hence $\OO_X(D)(U_{\bar{\sigma}})= \C [\bar{\sigma}^{\vee} \cap M ] \langle \chi^{\bar{m}}\rangle$. If $\OO_X(D)$ is globally generated, then $\bar{m} \in P_D$.
\end{lem}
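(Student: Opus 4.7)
The plan is to verify $\bar m\in P_D$ directly via the supporting hyperplane property of the facet of $Q_D$ associated to $\bar\sigma$, and to make the role of global generation explicit through the earlier Proposition.

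To have $\bar m\in P_D$ amounts to showing $\langle\bar m,u_\rho\rangle\geq -d_\rho$ for every $\rho\in\Sigma(1)$. For the rays $\rho\in\bar\sigma(1)$, the construction of $\bar m$ yields equality $\langle\bar m,u_\rho\rangle = -d_\rho$, since the hyperplane $\{u:\langle\bar m,u\rangle=-1\}$ is by hypothesis the one containing the face of $Q_D$ attached to $\bar\sigma$, and $u_\rho/d_\rho$ lies on that face by construction. For the remaining rays $\rho\in\Sigma(1)\setminus\bar\sigma(1)$, I would rely on the fact that $\bar\sigma\in\Sigma'(n)$ being top-dimensional forces the associated face of $Q_D$ to be top-dimensional too, i.e.\ a facet of $Q_D$; the hyperplane $\{u:\langle\bar m,u\rangle=-1\}$ is then a supporting hyperplane of the whole polytope $Q_D$. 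Since every $u_\rho/d_\rho$ belongs to $Q_D$ by the very definition of $Q_D$ as a convex hull, we obtain $\langle\bar m,u_\rho/d_\rho\rangle\geq -1$, i.e.\ $\langle\bar m,u_\rho\rangle\geq -d_\rho$, as required.

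An equivalent route, which makes the role of the global-generation hypothesis fully explicit, is the following. Let $\sigma\in\Sigma(n)$ be the cone of $\Sigma$ containing $\bar\sigma$. I would first check that $\bar m$ actually lies in $P_D^\sigma:=\{m\in M_\R:\langle m,u_\rho\rangle\geq -d_\rho,\ \rho\in\sigma(1)\}$, using the supporting-hyperplane step above restricted to the rays of $\sigma$, and then that $\bar m$ is a minimal $\C[\sigma^\vee\cap M]$-generator of $\OO_X(D)(U_\sigma)$. For the latter, given any nonzero $u\in\sigma^\vee\cap M\subseteq\bar\sigma^\vee\cap M$, the top-dimensionality of $\bar\sigma$ gives $\bar\sigma^\perp=0$, so $u$ cannot annihilate every ray of $\bar\sigma$, hence $\langle u,u_\rho\rangle>0$ for some $\rho\in\bar\sigma(1)$; then $\langle\bar m-u,u_\rho\rangle=-d_\rho-\langle u,u_\rho\rangle<-d_\rho$, which forces $\bar m-u\notin P_D^\sigma$. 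Applying the earlier Proposition characterizing global generation of Weil-divisorial sheaves on toric varieties, the hypothesis that $\OO_X(D)$ is globally generated forces every such minimal generator, $\bar m$ in particular, to lie in $P_D$.

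The main obstacle in either route is the same: ensuring that the face of $Q_D$ associated to $\bar\sigma\in\Sigma'(n)$ is a genuine facet of $Q_D$, so that its defining hyperplane is a supporting hyperplane of the whole polytope. This has to be extracted carefully from the construction of $\Sigma'$ as the fan generated by $\Sigma$ together with the faces of $Q_D$, where one must track the dimension of the face itself rather than only the cone it spans.
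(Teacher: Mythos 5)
Your first route is essentially the paper's own proof: equality $\langle\bar m,u_\rho\rangle=-d_\rho$ on the rays of $\bar\sigma$, then the supporting-hyperplane property of the corresponding facet of the convex polytope $Q_D$ (which the paper secures by observing $\langle\bar m,0\rangle=0>-1$ with $0$ in the interior of $Q_D$) to conclude $\langle\bar m,u_\rho\rangle\geq-d_\rho$ for all remaining rays, since each $u_\rho/d_\rho\in Q_D$. The ``obstacle'' you flag --- that the face attached to a top-dimensional $\bar\sigma\in\Sigma'(n)$ is a genuine facet, so its hyperplane supports all of $Q_D$ --- is exactly the point the paper's convexity remark is addressing, and your dimension count is the right justification for it.
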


\begin{proof} Since $\chi^{\bar{m}}$ is a generator of $\OO_X(D)(U_{\bar{\sigma}})$, we have that $\langle \bar{m}, u_{\rho}\rangle = -d_{\rho}$ for every $\rho \in \bar{\sigma}(1)$. Also, since $Q_D$ is convex and $\langle \bar{m}, 0 \rangle =0 > -1$,  we have that $\langle \bar{m}, u_{\rho}\rangle \geq -d_{\rho}$ for every $\rho \in \Sigma(1)$, hence $\bar{m} \in P_D$.
\end{proof}

\begin{pro} \label{tor-qnef}
Let $X$ be a normal toric variety and $D$ a Weil divisor whose corresponding reflexive sheaf is generated by global sections. Then there exists a small map $f:Y \to X$ of toric varieties such that $\bar{D}:=f_*^{-1}D$ is $\Q$-Cartier and $f$-ample, and the vertices of $P_D$ are given by the Cartier data $\{m_{\sigma}|\sigma \in \Sigma' (n)\}$ of $\bar{D}$, where $\Sigma'$ is the fan associated to $Y$.
\end{pro}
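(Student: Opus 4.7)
The plan is to take $Y := X_{\Sigma'}$ with the refinement $\Sigma'$ constructed in the paragraph preceding the proposition, and to show that this choice satisfies all three requirements. First I would verify that $f : Y \to X$ is a small toric birational morphism: every $1$-dimensional cone of $\Sigma'$ is already a ray of $\Sigma$, because each vertex of $Q_D$ is of the form $u_\rho/d_\rho$ and therefore lies on an existing ray $\rho \in \Sigma(1)$; thus no divisor is contracted.

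Next I would produce Cartier data for $\bar D$ one maximal cone at a time. Let $\bar\sigma \in \Sigma'(n)$. By construction of $\Sigma'$, either $\bar\sigma$ already sits inside a cone $\sigma \in \Sigma$ on which $D$ is locally principal (in which case we inherit the Cartier datum from $D|_{U_\sigma}$), or $\bar\sigma$ is the cone over an $(n-1)$-dimensional face $F$ of $Q_D$. In the latter case $F$ spans a unique affine hyperplane $\{u : \langle \bar m, u\rangle = -1\}$ with $\bar m \in M_\Q$; the global generation hypothesis together with Lemma \ref{toric-lemma} forces $\bar m \in P_D$. Because the hyperplane cuts $Q_D$ in a facet, duality of $P_D$ and $Q_D$ promotes $\bar m$ to a vertex of $P_D$, and conversely every vertex of $P_D$ arises from such a facet. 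Setting $m_{\bar\sigma} := \bar m$ and clearing denominators yields a $\Q$-Cartier structure on $\bar D$ whose Cartier data is exactly the vertex set of $P_D$.

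For $f$-ampleness I would invoke the relative toric Kleiman criterion: on a small toric birational morphism a $\Q$-Cartier divisor is $f$-ample iff its support function $\phi_{\bar D}$ is strictly convex across every wall of $\Sigma'$ that lies in the interior of a cone of $\Sigma$. Such a wall separates two adjacent maximal cones $\bar\sigma, \bar\sigma'$ which, under the bijection above, correspond to adjacent facets of $Q_D$, hence to two distinct vertices $m_{\bar\sigma} \neq m_{\bar\sigma'}$ of $P_D$. Strict convexity across the shared wall then follows from the convexity of $P_D$ combined with the fact that the two linear pieces $\langle m_{\bar\sigma}, \cdot\rangle$ and $\langle m_{\bar\sigma'}, \cdot\rangle$ are defined by distinct vertices of the same convex polytope.

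The main obstacle I anticipate is the bookkeeping for the piecewise definition of the Cartier data: one must verify that on each cone of $\Sigma$ that was already locally principal, the inherited datum agrees along the shared walls with the data coming from faces of $Q_D$ in neighbouring refined cones, so that the piecewise-linear function on $|\Sigma'|$ is genuinely the support function of one $\Q$-Cartier divisor $\bar D = f_*^{-1}D$. Once this compatibility is in place, the $f$-ampleness step is a short convex-geometric observation, and the identification of the Cartier data with the vertices of $P_D$ is exactly the content of Lemma \ref{toric-lemma} applied facet by facet.
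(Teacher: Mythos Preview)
Your proposal is correct and follows the same route as the paper: you take $Y=X_{\Sigma'}$ for the refinement $\Sigma'$ built from the facets of $Q_D$, invoke Lemma~\ref{toric-lemma} to get the Cartier data inside $P_D$, and then establish $f$-ampleness by checking that adjacent maximal cones of $\Sigma'$ lying over a single cone of $\Sigma$ carry distinct Cartier data. The paper phrases the last step as an intersection-number computation $(\bar D\cdot C)=\langle m,u\rangle-\langle m',u\rangle\neq 0$ after first noting $\bar D$ is globally generated (so the intersection is automatically $\geq 0$), whereas you phrase it as strict convexity via the relative Kleiman criterion; these are the same observation. Your version is in fact more explicit than the paper's on two points---verifying smallness of $f$ and identifying the Cartier data with the vertices of $P_D$ via $P_D$--$Q_D$ duality---both of which the paper essentially takes for granted. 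The compatibility ``obstacle'' you flag is not a real issue: the Cartier datum on each $\bar\sigma\in\Sigma'(n)$ is forced by the equations $\langle m_{\bar\sigma},u_\rho\rangle=-d_\rho$ for $\rho\in\bar\sigma(1)$, so there is nothing to patch.
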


\begin{proof}
We will consider the toric variety associated to the fan $\Sigma'$ generated by $\Sigma$ and the convex polytope $Q_D$. It follows from the construction that the divisor $\bar{D}$ is $\Q$-Cartier. By Lemma \ref{toric-lemma}, since $Q_D$ is convex, we have that the reflexive sheaf corresponding to $\bar{D}$ is still generated by global sections. 

Even more, every curve $C$ extracted via the map $f$ will correspond to a face $\tau \subseteq \Sigma'$. Since $\bar{D}$ is globally generated, we already know that $(\bar{D}.C)\geq 0$. In particular $\tau$ is given as the intersection of two maximal cones $\tau= \sigma \cap \sigma'$, and for each of the cones we have local generators of $\bar{D}$, $m$ and $m'$. The intersection is computed as $(\bar{D}.C)=\langle m,u\rangle- \langle m', u\rangle$, where $u$ is a ray in $\sigma\backslash \sigma'$, where this is zero if and only if $m=m'$, and this would not be one of the curves to be extracted by the map $f$ by definition.

\end{proof}

Because of Proposition \ref{tor-qnef} we get the following.

\begin{teo} Let $D$ be a Weil divisor on a normal toric variety $X$. Then D is q-nef if and only if $\OO_X(mD)$ is globally generated for $m \gg 0$.
\end{teo}

\begin{ex} Let $X$ be a normal projective toric variety and $A = \sum a_{\rho}D_{\rho}\subseteq X$ an ample divisor. Then $\text{qnt}_A(D)$ can be explicitly computed. In particular, let $D= \sum d_{\rho}D_{\rho}$ any Weil divisor. If no multiple of $D$ is  globally generated, this implies that for every $b \in \N$, there exists $\sigma_b \in \Sigma(n)$ such that $u_{\rho_b} \notin \sigma_b$ and $\langle m_{\sigma_b}^{bD}, u_{\rho_b} \rangle < -bd_{\rho_b}$, where $m_{\sigma_b}^{bD}$ is one of the generators of $\OO_X(bD)(U_{\sigma_b})$, i.e. there exists a positive rational number $\delta_{\rho_b}$ such that  $\langle m_{\sigma_b}^{bD}, u_{\rho_b} \rangle =  -bd_{\rho_b}  -\delta_{\rho_b}$. Since $A$ is ample, the support function of $A$ is strictly convex, and in particular $\langle m_{\sigma_a}^A, u_{\rho_b} \rangle > -a_{\rho_b}$, i.e. there exists a positive rational number $\varepsilon_{\rho_b}$ such that $\langle m_{\sigma_b}^A, u_{\rho_b} \rangle = -a_{\rho_b} + \varepsilon_{\rho_b}$. 

For every $\rho_b \notin \sigma_b$ and every $\sigma_b$ we can find a rational number $t_{b,\sigma_b, \rho_b}$ so that: 
$$\langle m_{\sigma_b}^D + t_{b,\sigma_b, \rho_b} m_{\sigma_b}^A, u_{\rho_b} \rangle =  -bd_{\rho_b} -\delta_{\rho_b} -t_{b,\sigma_b, \rho_b} a_{\rho_b} + t_{b,\sigma_b, \rho_b} \varepsilon_{\rho_b} = -bd_{\rho_b} -t_{b, \sigma_b,\rho_b} a_{\rho_b}.$$

Then $$\text{qnt}_A(D)= \inf_b{\max_{\sigma_b, \rho_b\notin \sigma_b} -t_{b, \sigma_b, \rho_b}}.$$

\end{ex}

\subsection{Minimal Log Discrepancies for Terminal Toric Threefolds}
In this last section we go back to properties of log discrepancies in the setting of \cite{dFH} in the context of toric varieties.

Depending on our choice of a relative canonical divisor, we have two possible definitions for the Minimal Log Discrepancies (MLD's). 

\begin{de}
Let $X$ be a normal variety over the complex numbers, we associate two numbers to the variety $X$:

$$\text{MLD$^-$}(X)= \inf_{E} \text{ord}_E(K^-_{Y/X})$$

and

$$\text{MLD$^+$}(X)= \inf_{E} \text{ord}_E(K^+_{Y/X})$$

where $E \subseteq Y$ is any prime divisor and $Y \to X$ is any proper birational morphism of normal varieties.
\end{de}

It is natural to wonder if these MLD's also satisfy the ACC conjecture. If $X$ is assumed to be $\Q$-Gorenstein, then this is conjectured to hold by V. Shokurov. In view of \cite[Theorem 5.4]{dFH}, the MLD$^+$'s correspond to MLD's of appropriate pairs $(X, \Delta)$. However the coefficients of $\Delta$ do not necessarily belong to a DCC set  (the investigation for log-pairs given in the $\mathbb{Q}$-Gorenstein context has this extra assumption in \cite{Ambro}). 

\begin{pro}
The set of all MLD$^+$'s for terminal toric threefolds does not satisfy the ACC conjecture.
\end{pro}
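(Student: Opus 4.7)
The plan is to exhibit an infinite family $\{X_n\}_{n\geq 1}$ of terminal non-$\Q$-Gorenstein affine toric threefolds for which $\text{MLD}^+(X_n)$ is strictly increasing in $n$. Since $\text{MLD}^+$ depends only on combinatorial data of the toric variety and of its toric modifications, the problem reduces to a combinatorial minimization on the fan of $X$ and on the polyhedron $P_{-K_X}$.

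First I would establish a purely combinatorial formula for $\text{val}_E(K^+_{Y/X})$ in the toric setting. For $X=X_\Sigma$, one has $-K_X=\sum_\rho D_\rho$ with polyhedron $P_{-K_X}=\{m\in M_\R:\langle m,u_\rho\rangle\geq -1\text{ for all }\rho\in\Sigma(1)\}$. Unpacking the de Fernex--Hacon pullback and invoking the small $\Q$-factorialization of Proposition \ref{tor-qnef}, for an exceptional prime toric divisor $E$ with primitive generator $u_E$ lying in the relative interior of a cone $\sigma$ the quantity $\text{val}_E(K^+_{Y/X})$ can be expressed as $-1$ plus a piecewise linear function of $u_E$ controlled by the rational vertices $m^\tau$ of $P_{-K_X}$ produced by the small $\Q$-factorialization. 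Terminality of $X$ in the sense of \cite{dFH} then translates into strict positivity of this expression for every primitive lattice vector in $|\Sigma|\setminus\Sigma(1)$.

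Next I would construct $X_n$ explicitly as the affine toric threefold whose fan consists of a single four-ray cone $\sigma_n\subseteq\R^3$ chosen so that: (a) the four ray generators are not coplanar in any affine hyperplane $\{\langle m,-\rangle=1\}$, so that $X_n$ is non-$\Q$-Gorenstein; (b) every primitive lattice point of $\sigma_n$ off the rays yields a strictly positive log discrepancy, which gives terminality; and (c) the infimum defining $\text{MLD}^+(X_n)$ is attained at an explicit ray $u_n$ whose contribution can be read off from the four rational vertices of $P_{-K_{X_n}}$. A natural one-parameter candidate is $\sigma_n=\text{cone}(e_1,e_2,e_3,\,e_1+e_2+n e_3)$ for $n\in\N$, whose four polyhedron vertices are obtained by intersecting three of the four hyperplanes $\{\langle m,u_\rho\rangle=-1\}$ and depend rationally on $n$.

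Finally, a direct calculation should yield a closed form for $\text{MLD}^+(X_n)$ of shape $1-c/n$ (or similar), giving a strictly increasing infinite sequence of $\text{MLD}^+$ values on terminal toric threefolds, contradicting the ACC. The main obstacle is to verify that the infimum defining $\text{MLD}^+(X_n)$ is actually attained at the ray $u_n$ one expects: a priori some less obvious primitive lattice vector of $\sigma_n$ could achieve a smaller value and spoil the monotonic behavior. Handling this requires a careful combinatorial optimization over all primitive lattice vectors in the interior of $\sigma_n$, exploiting the specific geometry of the family and the finitely many vertices $m^\tau_n$ of $P_{-K_{X_n}}$; this is where the bulk of the argument must lie.
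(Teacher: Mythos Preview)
Your overall strategy is exactly the paper's: build a one-parameter family of non-$\Q$-Gorenstein affine toric threefolds with a single four-ray cone, compute $\text{MLD}^+$ by a finite combinatorial minimization, and extract a strictly increasing sequence. The paper uses the cones $\sigma_a=\langle (2,-1,0),(2,0,1),(1,1,1),(a,1,0)\rangle$, works via the boundary characterization $\text{val}_E(K^+_{Y/X})=\inf_{\Delta}a_E(X,\Delta)$ (parametrizing $\Q$-Cartier boundaries by a point $m=(x,y,z)\in M_\Q$) rather than through $P_{-K_X}$ and Proposition~\ref{tor-qnef}, identifies the critical exceptional ray as $u_E=u_1+u_2+u_3$, and obtains $\text{MLD}^+=\frac{4a+5}{a+2}\nearrow 4$. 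The boundary parametrization is a bit more direct than your polyhedron route, but the two are equivalent in content, and the paper, like you, does not spell out in detail why the chosen $u_E$ realizes the infimum.

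There is, however, a concrete gap in your proposal: your candidate $\sigma_n=\text{cone}(e_1,e_2,e_3,e_1+e_2+ne_3)$ fails at the outset. The vector $e_1+e_2+ne_3$ is a positive combination of $e_1,e_2,e_3$, hence lies in the interior of the positive octant and is not an extremal ray. Thus $\sigma_n=\R^3_{\geq 0}$ for every $n$, the associated variety is $\mathbb{A}^3$, smooth and $\Q$-Gorenstein, and there is no family. Condition~(a) in your plan is violated before you can test (b) or~(c). To salvage the approach you need four primitive vectors that genuinely span a non-simplicial cone and that never lie on a common affine hyperplane $\{\langle m,\cdot\rangle=1\}$; getting this right while keeping the cone terminal and the minimization tractable is precisely the construction the paper supplies, and is not as automatic as ``a natural one-parameter candidate'' suggests.
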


\begin{proof}
We give an explicit example of a set of terminal toric threefolds whose associated MLD$^+$'s converge to a number from below. The problem is local, hence we will  consider a set of affine toric threefolds given by the following data.

Let $X$ be the affine toric variety associated to the cone $\sigma= \langle u_1, u_2, u_3, u_4 \rangle$, $u_1=(2, -1, 0)$, $u_2=(2, 0, 1)$, $u_3=(1,1,1)$, $u_4=(a, 1, 0)$  with $a\in \N$. The associated toric variety in non-$\Q$-Gorenstein, i.e. the canonical divisor $K_X= \sum -D_i$ is not $\Q$-Cartier.

Let $\Delta=\sum d_iD_i$ be a $\Q$-divisor such that $0\leq d_i \leq 1$ and $-K_X + \Delta$ is $\Q$-Cartier. This means that there exists $m= (x, y, z)$ such that $-K_X+ \Delta = \sum (m, u_i)D_i$. Hence $\Delta = (2x -y -1 )D_1 + (2x+z -1)D_2 +(x+y+z-1)D_3 + (ax+y-1)D_4$.

The exceptional divisor $E$ giving the smallest discrepancy is the one corresponding to the element $u_E=u_1 +u_2 +u_3$ (it is the exceptional divisor generated by the ray of smallest norm). In particular, we have 
$$\text{ord}_E(K^+_{Y/X})= \inf_{\Delta \text{ boundary}} 5x -2z.$$ 
Increasing the value of the parameter $a$ we see that the minimal valuation is given  by $\displaystyle{\frac{4a+5}{a+2}}$ which accumulates from below at the value $4$. Notice that the key is  solving a problem of minimality with constrains, corresponding to the fact that the coefficients of $\Delta$ need to be in the interval $[0,1)$.
\end{proof}

\subsection{Towards a revised toric MMP}

Let $X=X_{\Sigma}$ as in the previous section. With using Theorem \ref{cholds}, we will try to define a Minimal Model Program for any normal projective toric variety without attaching any boundary to it.

Let us recall the following well known fact \cite[Theorem 3.25 (3)]{KM}:

\begin{pro}
Let $(Y, \Delta)$ be a klt pair, $\Delta$ effective, and $f: Y\to X$ a projective morphism of normal projective varieties. Let $F \subset \overline{NE}(Y/X)$ be a $(K_Y+\Delta)$-negative extremal face. Then there is a unique morphism $cont_F: Y/X \to Z/X$ 
\end{pro}

Using this proposition and the MMP with scaling we will prove the following:

\begin{pro} 
For any normal projective toric variety $X$, there exists a small birational morphism $f: Z \to X$ such that $K_Z$ is nef over $X$.
\end{pro}

\begin{proof} Let $H\subset X$ be an ample divisor such that $K_X+H$ is q-nef (we know that this is possible because of Theorem 
We know that $f$ is a small birational morphism and that $K_Y+ f^*H$ is nef over $X$ (\cite[Proposition 6.2]{KM}), note that $Y$ is $\Q$-Gorenstein. At this point we run a Minimal Model with scaling of $H$ over $X$ to obtain another small morphism $\pi: Z \to X$ 
\end{proof}

\begin{pro}
Let $\pi: Z \to X$ be a small morphism of toric varieties such that $K_Z$ is nef over $X$. Then for any $K_Z$-negative curve $C$, there exists a boundary $\Delta$ on $X$ such that $(K_X +\Delta).\pi_*C <0$ i.e. every contraction on $Z$ arising from the cone theorem, induces a contraction on $X$. 
\end{pro}

\begin{proof}
Let $C \subset Z$ such that $K_Z.C<0$. Let $D$ be an ample effective Cartier divisor on $X$, in particular $D.C >0$. Also, writing $D= \sum d_{\rho}D_{\rho}$, we can assume that there exists an integer $k>0$ such that $D' :=D/k$ is a $\Q$-Cartier divisor such that $|d_{\rho}| <1$ for every $\rho \in \Sigma(1)$. Let us choose $\Delta = -K_X -D'$, that by assumption is an effective divisor and $K_X+ \Delta = -D'$ is $\Q$-Cartier and $(K_X+ \Delta).C <0$.

%
\end{proof}

\addcontentsline{toc}{chapter}{Bibliography}
\nocite{*}

\bibliographystyle{alpha}  
\bibliography{bibliografia}

  \vskip .2cm
   Stefano Urbinati,
   Universit\`a degli Studi di Padova,
   Dipartimento di Matematica,
   ROOM 630, Via Trieste 63,
   35121 Padova, Italy.

   \nopagebreak
\noindent   \textit{E-mail address:} \texttt{urbinati.st@gmail.com}

\end{document}